\newtheorem{theorem}{Theorem}[section]
\newtheorem{lemma}[theorem]{Lemma}
\newtheorem{corollary}[theorem]{Corollary}
\theoremstyle{definition}
\newtheorem{conjecture}[theorem]{Conjecture}
\theoremstyle{remark}
\newtheorem{remark}[theorem]{Remark}
\numberwithin{equation}{section}
\begin{document}
\setcounter{page}{1}

\title[ The multiplier problem for the ball on graded Lie groups ]{ On the multiplier problem for the ball on graded Lie groups}

\author[D. Cardona]{Duv\'an Cardona}
\address{
  Duv\'an Cardona S\'anchez:
  \endgraf
  Department of Mathematics: Analysis, Logic and Discrete Mathematics
  \endgraf
  Ghent University, Belgium
  \endgraf
  {\it E-mail address} {\rm duvan.cardonasanchez@ugent.be, duvanc306@gmail.com}
  }

\subjclass[2010]{Primary {22E30; Secondary 58J40}.}

\keywords{Rockland operator, Graded Lie group,  Fourier analysis}

\thanks{The author was supported by the FWO Odysseus 1 grant G.0H94.18N: Analysis and Partial Differential Equations of Prof. Michael Ruzhansky.}

\begin{abstract} In this note, we consider a non-commutative  analogy of the classical Fefferman multiplier problem for the ball.
More precisely, if  $\chi$ is the characteristic function of the unit interval $I=[0,1],$  we investigate a family of differential operators $\mathcal{R}$  on a  graded Lie group $G,$ for which the multipliers $\chi(\mathcal{R})$ are bounded on $L^p(G),$ if and only if  $p=2.$  
\end{abstract} \maketitle

\allowdisplaybreaks
\section{Introduction}
Let $n\geq 2.$ In his celebrated work \cite{Fefferman1971} C. Fefferman disproved  the disc conjecture. Indeed,  he proved that the Fourier multiplier $T_{B}$ defined by 
\begin{equation}
    \widehat{T_B f}=\chi_B \widehat{ f},\quad  f\in C^\infty_0(\mathbb{R}^n),\,\,  \widehat{ f}(\xi):=\smallint\limits_{\mathbb{R}^n} e^{-i2\pi x\cdot \xi}f(x)dx,
\end{equation}where $\chi_{B}$ is the  characteristic function of the ball $B=\{\xi\in \mathbb{R}^n:|\xi|\leq 1\},$ is $L^p$-bounded only for $p=2$.  As it was pointed out in \cite{Fefferman1971}, the operator $T_B$ and its variants play the role of the Hilbert transform for a number of problems on multiple Fourier series and boundary behavior of analytic functions of several complex variables. 

The aim of this note is to investigate an analogy of the multiplier problem for the ball in the case of nilpotent Lie groups, and with the goal  of applying our analysis to hypoelliptic differential operators of arbitrary order, we restrict our attention to the context  of graded Lie groups. One reason for this is the existence of Rockland operators; these are linear left invariant homogeneous hypoelliptic partial differential operators, in view of the Helffer and Nourrigat's resolution of the Rockland conjecture in \cite{helffer+nourrigat-79}. Such operators always exist on graded Lie groups and, in fact, the existence of such operators on nilpotent Lie groups does characterise the class of graded Lie groups (c.f. \cite[Section 4.1]{FischerRuzhanskyBook2015}). Graded Lie groups include $\mathbb{R}^n$, the Heisenberg group $\mathbb{H}^n$  and  any stratified
group.

To present the main result of this note, let  $\mathbb{G}$  be a graded Lie group (with $\dim(\mathbb{G})\geq 2$), and let us consider the product space  $G:=\mathbb{R}^n\times \mathbb{G}.$ Let  $\mathcal{G}$ be a positive Rockland operator on $\mathbb{G},$ and let us  define the  differential operator
 \begin{equation}
     \mathcal{R}=\Delta_{x}\otimes 1_{C^\infty(\mathbb{G})} +1_{C^\infty(\mathbb{R}^n)}\otimes \mathcal{G}=:\Delta_{x}+\mathcal{G},
 \end{equation}where $\Delta_{x}=-\sum_{j=1}^{n}\partial_{x_i}^2$ is the positive Laplacian on $\mathbb{R}^n.$ Observe that $\mathcal{R}$ is an unbounded positive operator on $ L^2(G).$
  Let $\chi$ be the characteristic  function of the unit interval $I=[0,1].$ The novelty of this note is that in Section \ref{Proof} we present a short proof for the following fact:
 \begin{theorem}\label{fact} Let $1<p<\infty.$ Then
    $\chi(\mathcal{R}):L^p(G)\rightarrow L^p(G)$\footnote{If $\widehat{G}$ denotes the unitary dual of a graded Lie group $G,$ $\chi(\mathcal{R})$ is defined by $\widehat{\chi(\mathcal{R})f}(\pi):=\chi(\pi(\mathcal{R})) \widehat{f}(\pi),$ for a.e. $\pi\in \widehat{G},$ and all $f\in C^\infty_0(G).$ Here, $ \widehat{f}:=\mathscr{F}_{G}f$ is the Fourier transform on the group  of $f,$ and $\pi(\mathcal{R})$ is the infinitesimal representation of $\mathcal{R}.$ See e.g. \cite{FischerRuzhanskyBook2015} for details.}    extends to a  bounded linear operator, if and only if $p=2.$
 \end{theorem}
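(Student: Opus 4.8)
\noindent The implication \emph{$p=2\Rightarrow\chi(\mathcal R)$ bounded} is the trivial half: $\mathcal R=\Delta_x\otimes 1+1\otimes\mathcal G$ is a non-negative self-adjoint operator on $L^2(G)$, being a sum of two commuting non-negative self-adjoint operators acting on the two tensor factors, and $\chi=\mathbf 1_{[0,1]}$ is a bounded Borel function; so by the spectral theorem $\chi(\mathcal R)$ is bounded on $L^2(G)$ with norm $1$ (it is the orthogonal projection of $L^2(G)$ onto the spectral subspace of $\mathcal R$ for $[0,1]$). The whole content of the statement is therefore the converse, and the plan is to transfer that question to a genuinely Euclidean multiplier problem and invoke Fefferman's theorem.

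Put $N:=[\mathbb G,\mathbb G]$ (or $N:=\{e\}$ when $\mathbb G$ is abelian). This is a homogeneous closed normal subgroup of $\mathbb G$, since the dilations of $\mathbb G$ are Lie algebra automorphisms, and $\mathbb G/N$ is a graded \emph{abelian} group, i.e.\ $\mathbb R^m$ with diagonal dilations, where $m=\dim(\mathbb G/N)\ge 2$ (a nilpotent Lie algebra is generated by any lift of a basis of its abelianisation, so a one-dimensional abelianisation would force $\dim\mathbb G=1$). Let $\overline{\mathcal G}$ be the reduction of $\mathcal G$, i.e.\ the image of $\mathcal G$ under $\mathcal U(\mathfrak g)\to\mathcal U(\mathfrak g/\mathfrak n)$: it is a left-invariant, $\nu$-homogeneous differential operator on $\mathbb R^m$, hence has constant coefficients and a polynomial symbol $\bar P(\eta)$, homogeneous for the diagonal dilations of $\widehat{\mathbb R^m}$. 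One checks that $\overline{\mathcal G}$ is again a positive Rockland operator: every unitary character of $\mathbb G$ factors through $\mathbb G/N$, say as $e^{i\langle\eta,\cdot\rangle}$, and its value on $\mathcal G$ equals $\bar P(\eta)$; the Rockland property of $\mathcal G$ forces $\bar P(\eta)\ne 0$ for $\eta\ne 0$, and positivity of $\mathcal G$ forces $\bar P\ge 0$, so $\bar P(\eta)>0$ for all $\eta\ne 0$ — which is exactly the Rockland/hypoellipticity condition on $\mathbb R^m$. In particular $\bar P\to+\infty$ at infinity, so $\Omega:=\{(\xi,\eta)\in\mathbb R^n\times\mathbb R^m:4\pi^2|\xi|^2+\bar P(\eta)\le 1\}$ is bounded; and on $\partial\Omega=\{4\pi^2|\xi|^2+\bar P(\eta)=1\}$ the gradient $\bigl(8\pi^2\xi,\nabla\bar P(\eta)\bigr)$ does not vanish (by Euler's relation $\nabla\bar P(\eta)=0$ forces $\bar P(\eta)=0$), so $\partial\Omega$ is a compact $C^\infty$ hypersurface.

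The crux is the transference step. Set $\overline{\mathcal R}:=\Delta_x+\overline{\mathcal G}$ on $\mathbb R^{n+m}=\mathbb R^n\times\mathbb R^m$, so that $\chi(\overline{\mathcal R})$ is the Fourier multiplier on $\mathbb R^{n+m}$ with symbol $\mathbf 1_\Omega$. On the dual side, $\widehat{\mathbb R^n\times(\mathbb G/N)}$ embeds into $\widehat G=\mathbb R^n\times\widehat{\mathbb G}$ as the set of $(\xi,\pi)$ with $\pi$ trivial on $N$, and for such $\pi$ one has $\pi(\mathcal G)=\bar\pi(\overline{\mathcal G})$; hence the operator-valued symbol $(\xi,\pi)\mapsto\mathbf 1_{[0,1]}\bigl(4\pi^2|\xi|^2+\pi(\mathcal G)\bigr)$ of $\chi(\mathcal R)$ restricts \emph{exactly} to the symbol of $\chi(\overline{\mathcal R})$. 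One then runs a de Leeuw--type restriction argument: given $\bar f\in C^\infty_0(\mathbb R^{n+m})$, lift it to $G$ and localise in the $N$-directions by $f_R(\bar g,z):=\bar f(\bar g)\,\psi\bigl(\delta^{N}_{1/R}z\bigr)$ in coordinates $\mathbb R^n\times\mathbb G\cong\mathbb R^{n+m}\times N$ adapted to a splitting $\mathfrak g=\mathfrak v\oplus\mathfrak n$, with $0\ne\psi\in C^\infty_0(N)$ and $\delta^{N}$ the dilations of $N$; as $R\to\infty$ the group Fourier transform of $f_R$ concentrates on the representations trivial on $N$, so $\chi(\mathcal R)f_R$ agrees with the corresponding lift--localisation of $\chi(\overline{\mathcal R})\bar f$ up to an error that is $o\bigl(R^{Q_N/p}\bigr)$ in $L^p(G)$, $Q_N$ the homogeneous dimension of $N$; since $\|f_R\|_{L^p(G)}=R^{Q_N/p}\|\psi\|_{L^p(N)}\|\bar f\|_{L^p(\mathbb R^{n+m})}$, dividing by $R^{Q_N/p}$ and letting $R\to\infty$ gives $\|\chi(\overline{\mathcal R})\|_{L^p(\mathbb R^{n+m})\to L^p(\mathbb R^{n+m})}\le\|\chi(\mathcal R)\|_{L^p(G)\to L^p(G)}$. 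The one genuinely delicate point — and the main obstacle — is the control of this error term, i.e.\ seeing that the \emph{discontinuous} symbol restricts well, its jump locus meeting $\widehat{\mathbb R^{n+m}}$ in the Lebesgue-null set $\partial\Omega$; everything else is soft. (Alternatively one may quote a ready-made restriction theorem for Fourier multipliers on graded groups of the form ``$m(\mathcal G)$ bounded on $L^p(\mathbb G)\Rightarrow m(\overline{\mathcal G})$ bounded on $L^p(\mathbb G/N)$''.)

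Finally, $\chi(\overline{\mathcal R})$ is the Fourier multiplier on $\mathbb R^{n+m}$ with symbol $\mathbf 1_\Omega$ for the bounded $C^\infty$ domain $\Omega$ above, and $n+m\ge 2$. At a point of $\partial\Omega$ farthest from the origin, $\partial\Omega$ lies inside, and is tangent to, a Euclidean sphere centred at the origin, so there its second fundamental form is positive definite: $\partial\Omega$ has a strictly convex cap of non-vanishing Gaussian curvature. By Fefferman's theorem \cite{Fefferman1971}, in its standard form for bounded $C^\infty$ domains whose boundary has a point of non-vanishing Gaussian curvature, $\chi(\overline{\mathcal R})$ is bounded on $L^p(\mathbb R^{n+m})$ only for $p=2$. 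Combined with the transference step, this forces $p=2$ whenever $\chi(\mathcal R)$ is bounded on $L^p(G)$, which completes the proof.
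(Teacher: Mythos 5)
Your forward implication and your final Euclidean step are fine (the farthest-point trick does give a curvature point on $\partial\Omega$, and Fefferman's argument does apply to bounded smooth domains with one point of non-vanishing Gaussian curvature), but the proof has a genuine gap exactly where you flag it: the transference inequality $\Vert \chi(\overline{\mathcal R})\Vert_{\mathscr{B}(L^p(\mathbb{R}^{n+m}))}\leq \Vert \chi(\mathcal R)\Vert_{\mathscr{B}(L^p(G))}$ is asserted, not proved. The set of representations of $G$ that are trivial on $N=[\mathbb G,\mathbb G]$ has Plancherel measure zero as soon as $\mathbb G$ is non-abelian, while the symbol $(\xi,\pi)\mapsto \chi\bigl(4\pi^2|\xi|^2+\pi(\mathcal G)\bigr)$ is only defined a.e.\ and is \emph{discontinuous}; a de Leeuw--type concentration argument needs quantitative control of $\chi\bigl(4\pi^2|\xi|^2+\pi(\mathcal G)\bigr)$ as $\pi$ degenerates to a character, i.e.\ continuity (or at least Riemann-type regularity) of the spectral multiplier transversally to the quotient, and this is precisely what an indicator function lacks: near the threshold the spectral projections can jump, so the claimed $o(R^{Q_N/p})$ error estimate is not a routine computation and is nowhere established. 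Nor can you ``quote a ready-made restriction theorem'': the known de Leeuw-type results for spectral multipliers of Rockland operators (and for quotients of nilpotent groups) are stated for continuous, or at least regulated, multiplier functions, not for characteristic functions. In addition, two smaller points in the same step need care: restricting an a.e.-defined operator-valued symbol to a null subset of $\widehat G$ requires fixing the canonical representative coming from the functional calculus, and the inference ``positivity of $\mathcal G$ forces $\bar P\ge 0$'' cannot be read off from the a.e.\ positivity of $\pi(\mathcal G)$, again because characters form a null set (one must argue via the quadratic form on functions pulled back from $\mathbb G/N$, or via positivity of $\pi(\mathcal G)$ for \emph{every} unitary irreducible $\pi$).

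For comparison, the paper avoids this obstruction by restricting in the opposite direction: it fixes (a.e.) a representation $\mu$ of $\mathbb G$ and unit vectors $u_\mu,v_\mu$, shows (its Lemma \ref{lemma1}, a soft duality/tensor argument) that the scalar symbol $\xi\mapsto(\chi(\tfrac1a(4\pi^2|\xi|^2+\mu(\mathcal G)))u_\mu,v_\mu)_{H_\mu}$ gives an $L^p(\mathbb R^n)$-bounded multiplier, and then uses the spectral measure of $\sqrt{\mu(\mathcal G)}$ together with a scaling argument to land on the ball multiplier $\chi(\Delta_x)$ on the Euclidean factor $\mathbb R^n$ itself, where Fefferman's theorem is applied directly; no restriction to a measure-zero family of representations, and no quotient of $\mathbb G$, is ever needed. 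If you want to salvage your route, the missing ingredient is a restriction theorem for discontinuous spectral multipliers to the abelianisation, which is a substantial statement in its own right; as written, your argument does not prove the theorem.
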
 The following conjecture remains open.
 \begin{conjecture}\label{conjecture}For  $1<p<\infty,$ 
    $\chi(\mathcal{G}):L^p(\mathbb{G})\rightarrow L^p(\mathbb{G})$    extends to a  bounded linear operator, if and only if $p=2.$
    \end{conjecture}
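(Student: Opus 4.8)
The plan is to adapt Fefferman's scheme for the Euclidean ball to the spectral projection $\chi(\mathcal{G})=\mathbf 1_{[0,1]}(\mathcal{G})$, and the first reductions are routine. Since $\chi$ is bounded by $1$ and $\mathcal{G}$ is self-adjoint, the spectral calculus gives $\|\chi(\mathcal{G})\|_{L^2\to L^2}\le 1$, so the case $p=2$ is immediate. For the converse it suffices to prove unboundedness for $p>2$: because $\chi$ is real-valued and $\mathcal{G}$ is self-adjoint, $\chi(\mathcal{G})$ is formally self-adjoint for the $L^2$ pairing, and its $L^p\to L^p$ boundedness is equivalent to $L^{p'}\to L^{p'}$ boundedness by duality. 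One should also record the \emph{scale invariance} coming from the graded structure: if $\mathcal{G}$ is homogeneous of degree $\nu$ for the dilations $D_r$, then conjugating by $D_r$ carries $\mathbf 1_{[0,1]}(\mathcal{G})$ to $\mathbf 1_{[0,r^{\nu}]}(\mathcal{G})$, and since the $D_r$ act as normalised isometries of $L^p(\mathbb G)$, the operator norm of $\mathbf 1_{[0,t]}(\mathcal{G})$ is independent of $t>0$. Thus the cutoff is \emph{critical at every scale}, exactly as for the ball multiplier, and no Littlewood--Paley rescaling can help.

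The substantive step is to reproduce Fefferman's counterexample. Recall the Euclidean mechanism: near a boundary point of the sphere the sharp cutoff $\mathbf 1_{\{|\xi|\le 1\}}$ looks, after linearisation, like a directional half-space multiplier (a directional Hilbert transform), and the failure of a uniform vector-valued inequality for these half-space multipliers over all directions is forced by a Besicovitch/Kakeya set, whose tubes have small total measure while their translates fill a set of comparable measure. Transplanting this to $\mathbb G$, the projection acts fibrewise in the Plancherel integral as $\mathbf 1_{[0,1]}(\pi(\mathcal{G}))$, where for non-abelian $\mathbb G$ the infinitesimal operator $\pi(\mathcal{G})$ has, generically, a discrete harmonic-oscillator-type spectrum on $\mathcal H_\pi$. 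One would want to isolate the ``boundary'' of the spectral sublevel set in the combined representation/Plancherel variables, linearise the sharp cutoff there into an anisotropic family of group ``directional'' singular integrals compatible with the dilations, and then exhibit a group analogue of the Kakeya compression to defeat the corresponding vector-valued bound.

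I expect \emph{this} step to be the main obstacle, and it is presumably why the statement is only conjectural. The non-commutative Plancherel geometry has no ready analogue of the tangent-hyperplane (half-space) linearisation of the sphere that underlies Fefferman's argument, and there is no established Besicovitch/Kakeya compression phenomenon on a general graded group. Even the input one would hope to bypass this with---sharp pointwise asymptotics of the convolution kernel $K=\chi(\mathcal{G})\delta_e$, the analogue of the oscillatory Bessel tail of the Euclidean ball kernel---are not available in the required precision for an arbitrary positive Rockland operator on an arbitrary $\mathbb G$.

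Given this, I would proceed through concrete special cases and auxiliary routes. First, as a consistency check, when $\mathbb G=\mathbb R^{m}$ with $m=\dim\mathbb G\ge 2$ and $\mathcal{G}=\Delta$ one has $\chi(\mathcal{G})=\mathbf 1_{\{|\xi|\le 1\}}$, so the conjecture is exactly Fefferman's theorem; for a general homogeneous hypoelliptic $\mathcal{G}$ on $\mathbb R^m$ it reduces to the known multiplier theory for indicators of the homogeneous convex sublevel set $\{\sigma(\mathcal{G})\le 1\}$, governed by the curvature of its boundary. The first genuinely non-abelian testbed is the Heisenberg group $\mathbb H^n$ with $\mathcal{G}$ the positive sub-Laplacian, where the Plancherel measure and the Hermite/Laguerre spectral decomposition are explicit and where Bochner--Riesz theory is comparatively developed; there one could attempt the kernel-asymptotics route, deriving the stationary-phase asymptotics of $K$ from the spectral measure of $\mathcal{G}$ and then testing $\chi(\mathcal{G})$ against a dilated, modulated family of functions designed to violate the $L^p$ bound for $p>2$. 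A successful construction on $\mathbb H^n$ would indicate which structural features---curvature of the spectral boundary, dimension of the centre---drive the phenomenon and how to formulate the Kakeya-type compression needed for the general case.
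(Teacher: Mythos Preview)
The statement you were asked to prove is labelled \emph{Conjecture} in the paper, and the very next sentence reads ``The following conjecture remains open.'' There is no proof in the paper to compare your proposal against: the author only establishes the weaker Theorem~1.1 for the product group $G=\mathbb{R}^n\times\mathbb{G}$ with $\mathcal{R}=\Delta_x+\mathcal{G}$, precisely because the Euclidean factor allows a reduction (via Lemma~3.1) to Fefferman's ball multiplier theorem on $\mathbb{R}^n$. That reduction is unavailable for $\mathcal{G}$ acting on $\mathbb{G}$ alone.

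Your proposal is not a proof and you acknowledge this. The preliminary reductions you record are correct: $L^2$-boundedness is immediate from the spectral theorem, duality reduces the unboundedness claim to $p>2$, and the dilation structure makes the $L^p$ operator norm of $\mathbf{1}_{[0,t]}(\mathcal{G})$ independent of $t>0$. You then correctly identify the genuine gap: there is no known analogue on a general graded Lie group of the tangent-hyperplane linearisation and the Besicovitch/Kakeya compression that drive Fefferman's argument, nor are there sufficiently sharp asymptotics for the convolution kernel of $\chi(\mathcal{G})$ for an arbitrary Rockland operator. Your suggestion to first attack the Heisenberg sub-Laplacian via the explicit Hermite/Laguerre spectral picture is a reasonable research programme, but it remains a programme, not a proof. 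In short, your write-up is an honest assessment of why the conjecture is open, consistent with the paper's own position; it does not, and does not claim to, close the gap.
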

     We finish this introduction with the following consequences of  Theorem \ref{fact}.
\begin{corollary}
    For a stratified Lie group $\mathbb{G},$ (the Heisenberg group for example), let  $X=\{X_{1},X_{2},\cdots, X_{\ell}\}$ be a basis for the first stratum of its Lie algebra. Consequently, $X$ is a H\"ormander system of vector fields. Then, the positive sub-Laplacian
    $$\mathcal{G}=-X_{1}^2-X_{2}^2-\cdots- X_{\ell}^2$$ is a Rockland operator of homogeneous degree 2 (see Lemma 4.1.7 of \cite{FischerRuzhanskyBook2015}). In view of   Theorem \ref{fact}, for $(G,\mathcal{R})=(\mathbb{R}^n\times \mathbb{G} ,\Delta_x+\mathcal{G}),$  the operator
    $$\chi(\mathcal{R}):= \chi\left(-\partial_{x_1}^2-\cdots -\partial_{x_n}^2-X_{1}^2-X_{2}^2-\cdots- X_{\ell}^2\right)  $$ is bounded on $L^p(G),$  with $1<p<\infty,$ if and only if $p=2.$
\end{corollary}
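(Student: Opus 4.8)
The plan is to reduce the claim, via a transference/product-structure argument, to Fefferman's original ball multiplier theorem, and then separately to exhibit an explicit failure of $L^p$-boundedness for $p\neq 2$ already on the Euclidean factor. The starting observation is that the spectral projection $\chi(\mathcal{R})$ with $\mathcal{R}=\Delta_x+\mathcal{G}$ on $G=\mathbb{R}^n\times\mathbb{G}$ cuts out the region $\{\xi\in\mathbb{R}^n,\ \pi\in\widehat{\mathbb{G}}:\ |\xi|^2+\pi(\mathcal{G})\le 1\}$ in the joint spectrum. Because $\mathcal{G}$ is a positive Rockland operator, $\pi(\mathcal{G})$ is a positive self-adjoint operator for each $\pi$, and the constraint $|\xi|^2\le 1-\pi(\mathcal{G})$ is, for each fixed spectral parameter $\lambda\ge 0$ of $\mathcal{G}$, exactly the characteristic function of a Euclidean ball of radius $\sqrt{1-\lambda}$ in the $\xi$ variable (empty when $\lambda>1$). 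So on the Fourier/representation side $\chi(\mathcal{R})$ is a ``bundle'' of Euclidean ball multipliers of varying radii, fibered over the spectrum of $\mathcal{G}$.

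The first main step is the negative direction for $p\neq 2$. Here I would freeze the $\mathbb{G}$-variable: test $\chi(\mathcal{R})$ on functions of the form $f(x,g)=f_1(x)\,\varphi_\varepsilon(g)$ where $\varphi_\varepsilon$ is an approximate identity on $\mathbb{G}$ concentrating at the identity (so its group Fourier transform is essentially supported where $\pi(\mathcal{G})$ is small, i.e. near $\lambda=0$). Applied to such $f$, $\chi(\mathcal{R})$ acts in the $x$-variable essentially as the Fefferman ball multiplier $T_{B}$ on $\mathbb{R}^n$ (the radius being $\sqrt{1-\lambda}\to 1$), tensored with the identity on the $\mathbb{G}$-factor. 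Since $n\ge 2$ is not required for the failure — indeed even $n=1$ suffices because for $n=1$ the ``ball'' multiplier on $\mathbb{R}$ is, after the fibering, a multiplier whose symbol jumps, and one can instead use the second-order structure to produce a directional Hilbert-transform-type operator; but more cleanly, since $\dim\mathbb{G}\ge 2$ and $\mathbb{G}$ contains a copy of $\mathbb{R}$ in a natural direction is not guaranteed, I would simply take $n\ge 2$ is \emph{not} assumed, so I must be careful — actually the cleanest route is: the failure is inherited from the full operator's restriction to the Euclidean subgroup only when $n\ge 2$, so for general $n$ one should instead de-fiber in the $\mathbb{G}$ direction. A more robust approach: use the dilation structure on $\mathbb{G}$ and a localization argument to show $\chi(\mathcal{R})$ contains, as a ``piece'', a Fefferman-type ball multiplier in $\mathbb{R}^{n}\times(\text{one homogeneous coordinate of }\mathbb{G})$, whose total dimension is $\ge 2$ since $\dim(\mathbb{G})\ge 2$. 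The key tool for turning ``$\chi(\mathcal{R})$ restricted to a subspace is unbounded'' into ``$\chi(\mathcal{R})$ is unbounded'' is a Fefferman-style Meyer lemma / de Leeuw-type restriction theorem adapted to this group setting: boundedness of a spectral multiplier of $\mathcal{R}$ on $L^p(G)$ forces boundedness of the induced ``restricted'' multipliers, and one of those is a genuine Euclidean ball multiplier in dimension $\ge 2$, which Fefferman's theorem \cite{Fefferman1971} says is unbounded for $p\ne 2$. The main obstacle is making this restriction/transference rigorous in the non-commutative setting — i.e. producing a clean analogue of de Leeuw's theorem and of the Besicovitch/Kakeha-set geometry inside $G$ — and this is presumably exactly where the ``short proof'' referenced in the excerpt does its work.

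The second step is the positive direction, $p=2$. This is immediate: $\chi(\mathcal{R})$ is a spectral projection of the nonnegative self-adjoint operator $\mathcal{R}$ on $L^2(G)$ (the operator $\mathcal{R}=\Delta_x+\mathcal{G}$ is essentially self-adjoint and nonnegative, as noted in the excerpt), hence $\chi(\mathcal{R})=\mathbf{1}_{[0,1]}(\mathcal{R})$ is an orthogonal projection and therefore a contraction on $L^2(G)$. Nothing group-theoretic is needed here beyond the Plancherel theorem for $G$ and the functional calculus for Rockland operators recorded in \cite{FischerRuzhanskyBook2015}.

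To assemble the proof: (i) record that $\mathcal{R}$ is positive self-adjoint and that $\chi(\mathcal{R})$ is the spectral projector onto $[0,1]$, giving $p=2$ boundedness; (ii) state and use the de Leeuw-type restriction principle to peel off from $\chi(\mathcal{R})$ a Euclidean ball multiplier acting on $L^p(\mathbb{R}^N)$ with $N\ge 2$; (iii) invoke Fefferman's theorem to conclude that this restricted multiplier, hence $\chi(\mathcal{R})$ itself, fails to be $L^p$-bounded for every $p\neq 2$. The delicate point to get right in step (ii) is the interplay between the homogeneous (parabolic-type) scaling of $\mathcal{R}$ — $\Delta_x$ has homogeneous degree $2$ while $\mathcal{G}$ has the homogeneous degree $\nu$ of the chosen Rockland operator, so $\mathcal{R}$ is not homogeneous unless $\nu=2$ — and the need to produce a flat (Euclidean, isotropic) ball after the reduction; one resolves this by working at a fixed spectral scale of $\mathcal{G}$ and exploiting that near any fixed nonzero value $\lambda_0\in(0,1)$ of $\mathcal{G}$ the level set $\{|\xi|^2=1-\lambda_0\}$ is a smooth Euclidean sphere, so the singularity of $\chi(\mathcal{R})$ along that sphere is genuinely the Euclidean ball-multiplier singularity, to which Fefferman/Meyer transference applies verbatim.
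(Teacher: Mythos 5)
Your $p=2$ step is fine, but the heart of your argument --- the passage from $L^p(G)$-boundedness of $\chi(\mathcal{R})$ to $L^p$-boundedness of a genuine Euclidean ball multiplier --- is precisely the step you leave unproved: you appeal to a ``de Leeuw-type restriction principle / Fefferman--Meyer transference adapted to the group setting'' and yourself remark that making it rigorous is presumably where the actual proof does its work. That is a genuine gap. The approximate-identity heuristic ($f=f_1\otimes\varphi_\varepsilon$ with $\widehat{\varphi_\varepsilon}$ ``essentially supported near $\lambda=0$'') does not by itself control the error in $L^p$ operator norm: on a noncompact graded group the spectrum of $\mathcal{G}$ has no gap at $0$, spectral localization of $\varphi_\varepsilon$ costs spatial spreading, and the required limiting argument is exactly the missing lemma. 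The paper closes this gap differently and completely. The corollary itself is a one-line consequence of Theorem \ref{fact} (one only checks that the sub-Laplacian is a positive Rockland operator of homogeneous degree $2$, Lemma 4.1.7 of \cite{FischerRuzhanskyBook2015}), and Theorem \ref{fact} is proved via Lemma \ref{lemma1}: testing $A=\chi_a(\mathcal{R})$ against tensor products $f_1\otimes f_2$, $g_1\otimes g_2$, using duality and Riesz--Thorin interpolation, one obtains essential-supremum control in $\mu$ of the operator-valued fibers $J(\mu)$ and hence uniform $L^p(\mathbb{R}^n)$-boundedness of the scalar multipliers $\xi\mapsto\bigl(\chi\bigl(\tfrac{1}{a}(e_\xi\otimes\mu)(\mathcal{R})\bigr)u_\mu,v_\mu\bigr)_{H_\mu}$. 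The symbol is then written with the spectral measure of $\sqrt{\mu(\mathcal{G})}$, the vector $u_\mu$ is chosen in the range of the projection $E_{\sqrt{\mu(\mathcal{G})}}(\sqrt{a})-E_{\sqrt{\mu(\mathcal{G})}}(0)$, and the mean value theorem for Riemann--Stieltjes integrals identifies the multiplier with $\chi_{R}(\Delta_x)$ for some $R\in(0,a)$; after rescaling, Fefferman's theorem \cite{Fefferman1971} yields the contradiction. None of these steps appears in your outline.

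Two further points. The dimensional worry you raise is moot: the paper fixes $n\geq 2$ from the first line of the introduction, so the ball multiplier produced lives on $\mathbb{R}^n$ with $n\geq 2$ and Fefferman's theorem applies directly; your fallback of adjoining ``one homogeneous coordinate of $\mathbb{G}$'' to the Euclidean factor is unjustified, since the group Fourier transform does not convert that coordinate into a Euclidean frequency variable, so no isotropic ball multiplier in dimension $n+1$ arises that way. Also, your concern about the inhomogeneity of $\mathcal{R}$ when the homogeneous degree of $\mathcal{G}$ differs from $2$ is irrelevant to the paper's argument: homogeneity of $\mathcal{R}$ is never used; only positivity and the functional calculus of $\mu(\mathcal{G})$ are needed, because the reduction is performed fiberwise in $\mu$ at a fixed spectral window, exactly as your ``bundle of balls'' picture suggests but never substantiates.
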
   
\begin{corollary} Let $\mathbb{G}$ be a graded Lie group of dimension $\varkappa.$ We denote by $\{D_{r}\}_{r>0}$ the natural family of dilations of its Lie algebra $\mathfrak{g}^{\circ}:=\textnormal{Lie}(\mathbb{G}),$ and by $\nu_1,\cdots,\nu_\varkappa$ its weights (see Section \ref{Prelimi}).  We fix a basis $Y=\{X_1,\cdots, X_{\varkappa}\}$ of  $\mathfrak{g}^{\circ}$ satisfying $D_{r}X_j=r^{\nu_j}X_{j},$ for $1\leq j\leq \varkappa,$ and all $r>0.$ If $\nu_{\circ}$ is any common multiple of $\nu_1,\cdots,\nu_\varkappa,$ the  operator 
$$ \mathcal{G}=\sum_{j=1}^{\varkappa}(-1)^{\frac{\nu_{\circ}}{\nu_j}}c_jX_{j}^{\frac{2\nu_{\circ}}{\nu_j}},\,\,c_j>0, $$ is a positive Rockland operator of homogeneous degree $2\nu_{\circ }$ (see Lemma 4.1.8 of \cite{FischerRuzhanskyBook2015}). Again, for $G=\mathbb{R}^n\times \mathbb{G},$ in view of  Theorem \ref{fact}, $\chi(\Delta_x+\mathcal{G}):L^p(G)\rightarrow L^p(G)$ is bounded, if and only if $p=2.$
    
\end{corollary}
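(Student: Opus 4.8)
The plan is to reduce the statement entirely to an application of Theorem \ref{fact}, whose single structural hypothesis is that $\mathcal{G}$ be a positive Rockland operator on the graded group $\mathbb{G}$. Thus the whole content of the corollary is the verification that the explicit operator $\mathcal{G}=\sum_{j=1}^{\varkappa}(-1)^{\nu_\circ/\nu_j}c_jX_j^{2\nu_\circ/\nu_j}$ is left-invariant, $D_r$-homogeneous of degree $2\nu_\circ$, positive on $L^2(\mathbb{G})$, and satisfies the Rockland condition. I would begin with the bookkeeping. Since $\nu_\circ$ is a common multiple of the weights, each $m_j:=\nu_\circ/\nu_j$ is a positive integer, so every summand $X_j^{2m_j}$ is a genuine (even-order) differential operator; and since $D_rX_j=r^{\nu_j}X_j$, the term $X_j^{2\nu_\circ/\nu_j}$ is $D_r$-homogeneous of degree $\nu_j\cdot(2\nu_\circ/\nu_j)=2\nu_\circ$. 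Hence $\mathcal{G}$ is homogeneous of degree $2\nu_\circ$. Left-invariance is immediate, since each $X_j$ is left-invariant and sums and products of left-invariant operators are left-invariant.

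Next I would explain the role of the sign factors: they are chosen precisely so that each summand is a power of a nonnegative operator. On the unimodular group $\mathbb{G}$ each left-invariant field $X_j$ is formally skew-adjoint, so that $-X_j^2=X_j^*X_j\geq 0$; raising to the integer power $m_j$ gives $(-X_j^2)^{m_j}=(-1)^{m_j}X_j^{2m_j}=(-1)^{\nu_\circ/\nu_j}X_j^{2\nu_\circ/\nu_j}$, whence
\[
 \mathcal{G}=\sum_{j=1}^{\varkappa}c_j\,(-X_j^2)^{\nu_\circ/\nu_j},\qquad c_j>0,
\]
is a sum of nonnegative self-adjoint operators and therefore positive on $L^2(\mathbb{G})$.

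For the Rockland condition I would argue at the level of representations. Let $\pi$ be a nontrivial irreducible unitary representation of $\mathbb{G}$ and suppose $\pi(\mathcal{G})v=0$ for a smooth vector $v$. Pairing with $v$ and using that each $\pi(-X_j^2)^{m_j}=(\pi(X_j)^*\pi(X_j))^{m_j}$ is nonnegative, the vanishing of the sum forces $\langle\pi(-X_j^2)^{m_j}v,v\rangle=0$ for every $j$. The delicate point is to pass from here to $\pi(X_j)v=0$: by the spectral theorem for the nonnegative self-adjoint operator $A_j:=\pi(-X_j^2)$, the identity $\langle A_j^{m_j}v,v\rangle=0$ concentrates the spectral measure of $v$ at the origin, so $A_jv=0$ and hence $\|\pi(X_j)v\|^2=\langle A_jv,v\rangle=0$. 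Since $\{X_1,\dots,X_\varkappa\}$ is a basis of $\mathfrak{g}^\circ$, the vector $v$ is annihilated by the entire infinitesimal action, hence fixed by $\pi(\mathbb{G})$; irreducibility and nontriviality of $\pi$ then force $v=0$. Thus $\pi(\mathcal{G})$ is injective on smooth vectors for every nontrivial $\pi$, which is exactly the Rockland condition, and the Helffer--Nourrigat theorem yields hypoellipticity. All of this is the content of Lemma 4.1.8 of \cite{FischerRuzhanskyBook2015}, which may simply be cited.

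Having identified $\mathcal{G}$ as a positive Rockland operator of homogeneous degree $2\nu_\circ$ on $\mathbb{G}$, the pair $(G,\mathcal{R})=(\mathbb{R}^n\times\mathbb{G},\,\Delta_x+\mathcal{G})$ meets every hypothesis of Theorem \ref{fact} (with $\dim(\mathbb{G})=\varkappa\geq 2$ inherited from its standing assumption), and that theorem applies verbatim to conclude that $\chi(\Delta_x+\mathcal{G})$ is bounded on $L^p(G)$, $1<p<\infty$, if and only if $p=2$. I expect no genuine obstacle here: the only step needing care is the Rockland condition, and within it the spectral-calculus passage from $\langle A_j^{m_j}v,v\rangle=0$ to $A_jv=0$; everything else is algebraic bookkeeping followed by a direct invocation of Theorem \ref{fact}.
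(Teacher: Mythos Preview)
Your proposal is correct and follows exactly the route the paper takes: the paper does not give a separate proof of this corollary but simply cites Lemma~4.1.8 of \cite{FischerRuzhanskyBook2015} for the fact that $\mathcal{G}$ is a positive Rockland operator of homogeneous degree $2\nu_\circ$, and then invokes Theorem~\ref{fact}. Your additional sketch of why that lemma holds (homogeneity bookkeeping, positivity via $(-X_j^2)^{m_j}$, and the spectral argument for the Rockland condition) is accurate and simply unpacks the cited reference.
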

    
\section{Preliminaries}\label{Prelimi} The notation  used here for the representation theory and the Fourier analysis of graded Lie groups will be taken from Folland and Stein \cite{FE} (and from Ruzhansky and Fischer \cite{FischerRuzhanskyBook2015}).
\subsection{Graded Lie groups} Let $G$ be a homogeneous Lie group, in the sense that $G$ is a connected and simply connected Lie group whose Lie algebra $\mathfrak{g}$ is endowed with a family of dilations $D_{r}^{\mathfrak{g}},$ $r>0,$ which are automorphisms on $\mathfrak{g}$  satisfying the following two conditions:
(1) For every $r>0,$ $D_{r}^{\mathfrak{g}}$ is a map of the form
$$ D_{r}^{\mathfrak{g}}=\textnormal{Exp}(\ln(r)A) $$
for some diagonalisable linear operator $A\equiv \textnormal{diag}[\nu_1,\cdots,\nu_n]$ on $\mathfrak{g}.$
(2) $\forall X,Y\in \mathfrak{g}, $ and $r>0,$ $[D_{r}^{\mathfrak{g}}X, D_{r}^{\mathfrak{g}}Y]=D_{r}^{\mathfrak{g}}[X,Y].$ 

We call  the eigenvalues of $A,$ $\nu_1,\nu_2,\cdots,\nu_n,$ the dilations weights or weights of $G$.  The homogeneous dimension of a homogeneous Lie group $G$ is given by  $$ Q=\textnormal{\textbf{Tr}}(A)=\nu_1+\cdots+\nu_n.  $$
The dilations $D_{r}^{\mathfrak{g}}$ of the Lie algebra $\mathfrak{g}$ induce a family of  maps on $G$ defined via,
$$ D_{r}:=\exp_{G}\circ D_{r}^{\mathfrak{g}} \circ \exp_{G}^{-1},\,\, r>0, $$
where $\exp_{G}:\mathfrak{g}\rightarrow G$ is the usual exponential mapping associated to the Lie group $G.$ We refer to the family $D_{r},$ $r>0,$ as dilations on the group. If we write $rx=D_{r}(x),$ $x\in G,$ $r>0,$ then a relation on the homogeneous structure of $G$ and the Haar measure $dx$ on $G$ is given by $$ \int_{G}(f\circ D_{r})(x)dx=r^{-Q}\int_{G}f(x)dx. $$
    
A  Lie group is graded if its Lie algebra $\mathfrak{g}$ may be decomposed as the sum of subspaces $\mathfrak{g}=\mathfrak{g}_{1}\oplus\mathfrak{g}_{2}\oplus \cdots \oplus \mathfrak{g}_{s}$ such that $[\mathfrak{g}_{i},\mathfrak{g}_{j} ]\subset \mathfrak{g}_{i+j},$ and $ \mathfrak{g}_{i+j}=\{0\}$ if $i+j>s.$  Examples of such groups are the Heisenberg group $\mathbb{H}^n$ and more generally any stratified groups where the Lie algebra $ \mathfrak{g}$ is generated by $\mathfrak{g}_{1}$.

A Lie algebra admitting a family of dilations is nilpotent, and hence so is its associated
connected, simply connected Lie group.

A graded Lie group $G$ is a homogeneous Lie group equipped with a family of weights $\nu_j,$ all of them positive rational numbers. Let us observe that if $\nu_{i}=\frac{a_i}{b_i}$ with $a_i,b_i$ integer numbers,  and $b$ is the least common multiple of the $b_i's,$ the family of dilations 
$$ \mathbb{D}_{r}^{\mathfrak{g}}=\textnormal{Exp}(\ln(r^b)A):\mathfrak{g}\rightarrow\mathfrak{g}, $$
have integer weights,  $\nu_{i}=\frac{a_i b}{b_i},$ (see Remark 4.1.4 of \cite{FischerRuzhanskyBook2015}).
\subsection{Fourier analysis on nilpotent Lie groups}

Let $G$ be a simply connected nilpotent Lie group.  
Let us assume that $\pi$ is a continuous, unitary and irreducible  representation of $G,$ this means that,
\begin{itemize}
    \item $\pi\in \textnormal{Hom}(G, \textnormal{U}(H_{\pi})),$ for some separable Hilbert space $H_\pi,$ i.e. $\pi(xy)=\pi(x)\pi(y)$ and for the  adjoint of $\pi(x),$ $\pi(x)^*=\pi(x^{-1}),$ for every $x,y\in G.$
    \item The map $(x,v)\mapsto \pi(x)v, $ from $G\times H_\pi$ into $H_\pi$ is continuous.
    \item For every $x\in G,$ and $W_\pi\subset H_\pi,$ if $\pi(x)W_{\pi}\subset W_{\pi},$ then $W_\pi=H_\pi$ or $W_\pi=\emptyset.$
\end{itemize} Let $\textnormal{Rep}(G)$ be the set of unitary, continuous and irreducible representations of $G.$ The relation, {\small{
\begin{equation*}
    \pi_1\sim \pi_2\textnormal{ if and only if, there exists } A\in \mathscr{B}(H_{\pi_1},H_{\pi_2}),\textnormal{ such that }A\pi_{1}(x)A^{-1}=\pi_2(x), 
\end{equation*}}}for every $x\in G,$ is an equivalence relation and the unitary dual of $G,$ denoted by $\widehat{G}$ is defined via
$
    \widehat{G}:={\textnormal{Rep}(G)}/{\sim}.
$ Let us denote by $d\pi$ the Plancherel measure on $\widehat{G}.$ 
The Fourier transform of $f\in \mathscr{S}(G), $ (this means that $f\circ \textnormal{exp}_G\in \mathscr{S}(\mathfrak{g})$, with $\mathfrak{g}\simeq \mathbb{R}^{\dim(G)}$) at $\pi\in\widehat{G},$ is defined by 
\begin{equation*}
    \widehat{f}(\pi)=\int\limits_{G}f(x)\pi(x)^*dx:H_\pi\rightarrow H_\pi,\textnormal{   and   }\mathscr{F}_{G}:\mathscr{S}(G)\rightarrow \mathscr{S}(\widehat{G}):=\mathscr{F}_{G}(\mathscr{S}(G)).
\end{equation*}

If we identify one representation $\pi$ with its equivalence class, $[\pi]=\{\pi':\pi\sim \pi'\}$,  for every $\pi\in \widehat{G}, $ the Kirillov trace character $\Theta_\pi$ defined by  $ (\Theta_{\pi},f):
=\textnormal{\textbf{Tr}}(\widehat{f}(\pi)),$ is a tempered distribution on $\mathscr{S}(G).$ In particular, the identity
$
    f(e_G)=\int\limits_{\widehat{G}}(\Theta_{\pi},f)d\pi,
$
implies the Fourier inversion formula $f=\mathscr{F}_G^{-1}(\widehat{f}),$ where
\begin{equation*}
    (\mathscr{F}_G^{-1}\sigma)(x):=\int\limits_{\widehat{G}}\textnormal{\textbf{Tr}}(\pi(x)\sigma(\pi))d\pi,\,\,x\in G,\,\,\,\,\mathscr{F}_G^{-1}:\mathscr{S}(\widehat{G})\rightarrow\mathscr{S}(G),
\end{equation*}is the inverse Fourier  transform. In this context, the Plancherel theorem takes the form $\Vert f\Vert_{L^2(G)}=\Vert \widehat{f}\Vert_{L^2(\widehat{G})}$,  where  $ L^2(\widehat{G}):=\int\limits_{\widehat{G}}H_\pi\otimes H_{\pi}^*d\pi,$  is the Hilbert space endowed with the norm: $\Vert \sigma\Vert_{L^2(\widehat{G})}=(\int_{\widehat{G}}\Vert \sigma(\pi)\Vert_{\textnormal{HS}}^2d\pi)^{\frac{1}{2}}.$ In this context, every left-invariant continuous linear operator $A:C^\infty(G)\rightarrow C^\infty(G),$ has a right-convolution distributional kernel $k_A\in \mathscr{D}'(G),$ such that $Af=f\ast k,$ for all $f\in C^\infty(G),$ and if $A$ admits a bounded extension on $L^2(G)$ we have that $\widehat{Af}=\sigma \widehat{f},$ where $\sigma:=\widehat{k}$ (see \cite[Page 132]{FischerRuzhanskyBook2015}).

\section{Proof of Theorem \ref{fact}}\label{Proof}

We reserve the notation $\widehat{H}$ for the unitary dual of a graded Lie group $H$. In particular, $\widehat{\mathbb{R}}^n=\{e_{\xi}:\xi\in \mathbb{R}^n\},$  $e_{\xi}(x)=e^{i2\pi x\cdot \xi},$ $x\in \mathbb{R}^n,$ and $\widehat{G}=\widehat{\mathbb{R}}^n\times\widehat{\mathbb{G}}. $
For $\mu\in \widehat{\mathbb{G}},$ $H_{\mu}$ denotes the corresponding representation space and $I_{H_\mu}:{H_\mu}\rightarrow{H_\mu}$ is the identity operator.  We use  $\mathscr{B}(L^p(H))$ for the class of bounded linear operators on $L^p(H):=L^p(H,dg)$ with respect to the Haar measure $dg$ on $H.$  We will start the proof of Theorem \ref{fact}  with the following lemma. 
\begin{lemma}\label{lemma1}
Let $m:=\{m(e_\xi,\mu)\}_{(e_\xi, \mu)\in \widehat{\mathbb{R}}^n\times \widehat{\mathbb{G}}  },$ be the symbol of a (Fourier multiplier) left invariant continuous linear operator $A:C^\infty(G)\rightarrow C^\infty(G),$ and let $1<p<\infty.$ If $A:L^p(G)\rightarrow L^p(G)$  is bounded, for a.e. $\mu\in \widehat{\mathbb{G}},$ and for $u_\mu,v_\mu\in H_\mu,$ such that $\Vert u_\mu\Vert_{H_\mu}=\Vert u_\mu\Vert_{H_\mu}=1,$ the operator  $A_{\mu}:L^p(\mathbb{R}^n)\rightarrow L^p(\mathbb{R}^n)$ also is  bounded, where  $A_{\mu}$ is the Fourier multiplier associated with the symbol $m_{\mu,u_\mu,v_\mu},$ defined by
\begin{equation}
   m_{\mu,u_\mu,v_\mu}=(m(\mu)u_{\mu},v_{\mu})_{H_\mu},\,\,\ m(\mu):=\{m(e_\xi,\mu)\}_{e_\xi\in \widehat{\mathbb{R}}^n  }.
\end{equation}Moreover,
\begin{equation*}
    \textnormal{ess}\sup_{\mu\in \widehat{\mathbb{G}}}\Vert A_\mu\Vert_{\mathscr{B}(L^p(\mathbb{R}^n))}\leq \Vert A\Vert_{\mathscr{B}(L^p(G))}.
\end{equation*}
\end{lemma}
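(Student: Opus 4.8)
The plan is to exploit the product structure $G = \mathbb{R}^n \times \mathbb{G}$ together with the tensor decomposition of the group Fourier transform. First I would recall that since $A$ is left-invariant and bounded on $L^p(G)$, it acts by group-Fourier multiplication: $\widehat{Af}(e_\xi,\mu) = m(e_\xi,\mu)\,\widehat{f}(e_\xi,\mu)$, where for each fixed $\mu \in \widehat{\mathbb{G}}$ the operator $m(\mu) = \{m(e_\xi,\mu)\}_{\xi}$ is a field of bounded operators on $H_\mu$ depending measurably on $\xi$. The candidate operator $A_\mu$ on $L^p(\mathbb{R}^n)$ is the (scalar) Euclidean Fourier multiplier with symbol $\xi \mapsto m_{\mu,u_\mu,v_\mu}(e_\xi) = (m(e_\xi,\mu)u_\mu, v_\mu)_{H_\mu}$. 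The strategy is a \emph{transference / restriction} argument: test $A$ on functions of product type $f(x,g) = \varphi(x)\,\psi_\mu(g)$, where $\varphi \in C_0^\infty(\mathbb{R}^n)$ and $\psi_\mu$ is chosen so that $\widehat{\psi_\mu}(\mu')$ is (approximately) a rank-one operator $u_\mu \otimes \overline{v_\mu}$ concentrated near $\mu' = \mu$, and then extract the $L^p(\mathbb{R}^n)$-norm bound on $A_\mu\varphi$ from the $L^p(G)$-norm bound on $Af$.

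In detail I would proceed as follows. \textbf{Step 1:} Fix $\mu$ in the (conull) set where $A_\mu$ is to be controlled. Using the Plancherel theory on $\widehat{\mathbb{G}}$, select a normalized sequence $\psi_\mu^{(k)} \in \mathscr{S}(\mathbb{G})$ whose Fourier transforms are rank-one operators $u_\mu^{(k)} \otimes \overline{v_\mu^{(k)}}$ supported in shrinking neighborhoods $E_k \ni \mu$ of Plancherel measure $\varepsilon_k \to 0$, with $\|u_\mu^{(k)}\|_{H_\mu} = \|v_\mu^{(k)}\|_{H_\mu} = 1$ and $u_\mu^{(k)}, v_\mu^{(k)} \to u_\mu, v_\mu$; here I use that the Fourier transform of a convolution of suitable matrix coefficients is rank one and that Plancherel measure has no atoms for $\dim\mathbb{G}\ge 1$. \textbf{Step 2:} For $f^{(k)}(x,g) = \varphi(x)\psi_\mu^{(k)}(g)$, compute $\widehat{Af^{(k)}}(e_\xi,\mu') = m(e_\xi,\mu')\,\widehat{\varphi}(\xi)\,(u_\mu^{(k)}\otimes\overline{v_\mu^{(k)}})\mathbf{1}_{E_k}(\mu')$, so that after applying the inverse Fourier transform on $\mathbb{G}$ only and using the Plancherel formula in the $\mathbb{G}$-variable, $\|Af^{(k)}\|_{L^p(G)}^p$ disentangles into a product of an integral over $\mathbb{R}^n$ of $|\big(A_\mu^{(k)}\varphi\big)(x)|^p$ — with $A_\mu^{(k)}$ the Euclidean multiplier with symbol $(m(e_\xi,\mu')u_\mu^{(k)},v_\mu^{(k)})$ — times a factor comparable to $\|\psi_\mu^{(k)}\|_{L^p(\mathbb{G})}^p$; the same computation with $A$ replaced by the identity gives $\|f^{(k)}\|_{L^p(G)}^p \sim \|\varphi\|_{L^p(\mathbb{R}^n)}^p\,\|\psi_\mu^{(k)}\|_{L^p(\mathbb{G})}^p$. \textbf{Step 3:} Dividing and letting $k \to \infty$, the common $\psi_\mu^{(k)}$-factors cancel and, by continuity of the symbol field and dominated convergence, $\|A_\mu\varphi\|_{L^p(\mathbb{R}^n)} \le \|A\|_{\mathscr{B}(L^p(G))}\,\|\varphi\|_{L^p(\mathbb{R}^n)}$; taking the essential supremum over admissible $\mu$ gives the displayed inequality.

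The main obstacle I anticipate is \textbf{Step 2}: making rigorous the claim that, on a product group, $L^p$-norms of functions whose $\mathbb{G}$-Fourier transform is rank-one and spectrally localized \emph{approximately factor} as a product of an $L^p(\mathbb{R}^n)$-norm and an $L^p(\mathbb{G})$-norm, uniformly as the localization shrinks. For $p = 2$ this is exactly Plancherel, but for $p \ne 2$ one does not have an isometry and must instead argue that the operator $f \mapsto Af$ commutes with the rank-one spectral projection in the $\mathbb{G}$-variable and then invoke a genuine transference principle — e.g.\ comparing $A$ acting on $\varphi \otimes \psi_\mu^{(k)}$ with $(A_\mu^{(k)}\varphi)\otimes\psi_\mu^{(k)}$ directly at the level of convolution kernels, $k_A = k_{A_\mu}\otimes \delta_0$-type decompositions — so that the $\psi_\mu^{(k)}$-dependence is literally a tensor factor and cancels exactly rather than asymptotically. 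Carefully setting up this kernel-level factorization (and verifying the convergence $u_\mu^{(k)},v_\mu^{(k)} \to u_\mu,v_\mu$ interacts well with the multiplier, which only requires weak measurability of the symbol field) is the technical heart; everything else is bookkeeping with the Plancherel measure and dominated convergence.
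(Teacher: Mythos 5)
There is a genuine gap, and it sits exactly where you located it: Step 2. For $f^{(k)} = \varphi \otimes \psi_\mu^{(k)}$ with $\widehat{\psi_\mu^{(k)}}(\mu') = u_\mu^{(k)} \otimes \overline{v_\mu^{(k)}}\,\mathbf{1}_{E_k}(\mu')$, the symbol acts as an \emph{operator} on $H_{\mu'}$, so
$m(e_\xi,\mu')\,\widehat{\varphi}(\xi)\,\bigl(u_\mu^{(k)}\otimes\overline{v_\mu^{(k)}}\bigr) = \widehat{\varphi}(\xi)\,\bigl(m(e_\xi,\mu')u_\mu^{(k)}\bigr)\otimes\overline{v_\mu^{(k)}}$, and the range vector $m(e_\xi,\mu')u_\mu^{(k)}$ depends on $\xi$. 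Hence $Af^{(k)}$ is not of product type, not even approximately, and its $L^p(G)$-norm does not disentangle into $\|A_\mu^{(k)}\varphi\|_{L^p(\mathbb{R}^n)}$ times a $\psi$-factor. Crucially, this is not an error that shrinks as $E_k$ shrinks: already at $\mu' = \mu$ exactly, $m(e_\xi,\mu)u_\mu$ is in general not parallel to $u_\mu$, so the scalar coefficient $(m(e_\xi,\mu)u_\mu,v_\mu)$ never appears from the input side alone, and the limit $k\to\infty$ cannot repair it. Your proposed remedy — a kernel-level factorization of the type $k_A = k_{A_\mu}\otimes\delta_0$ — fails for the same reason: the convolution kernel of $A$ is a tensor product only when the symbol itself is, which is precisely not the situation of interest (e.g.\ $\chi(\Delta_x+\mathcal{G})$). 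For $p\neq2$ there is no Plancherel-type isometry to fall back on, so the "approximate factorization of $L^p$ norms" that carries your whole argument is unavailable.

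The scalar $m_{\mu,u_\mu,v_\mu}(\xi)=(m(e_\xi,\mu)u_\mu,v_\mu)_{H_\mu}$ only emerges after you also pair against $v_\mu$ on the $\mathbb{G}$ side, and this is what the paper's proof does: it works with the bilinear form $(Af,g)$ for product test functions $f=f_1\otimes f_2$, $g=g_1\otimes g_2$ on \emph{both} sides. The trace pairing then factorizes exactly (no limiting procedure, no norm factorization of outputs), producing an intermediate Fourier multiplier $T_J$ on $\mathbb{G}$ with operator-valued symbol $J(\mu)=\int_{\mathbb{R}^n} m(\xi,\mu)\,\widehat{f_1}(\xi)\,\overline{\widehat{g_1}(\xi)}\,d\xi$, bounded on $L^p(\mathbb{G})$ by $\Vert A\Vert_{\mathscr{B}(L^p(G))}\Vert f_1\Vert_{L^p}$. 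Interpolation (with the dual exponent) gives $\sup_\mu\Vert J(\mu)\Vert_{\mathscr{B}(H_\mu)}\leq\Vert T_J\Vert_{\mathscr{B}(L^p(\mathbb{G}))}$, and the matrix coefficient $(J(\mu)u_\mu,v_\mu)=\int m_{\mu,u_\mu,v_\mu}(\xi)\widehat{f_1}(\xi)\overline{\widehat{g_1}(\xi)}\,d\xi$ is precisely the duality pairing of $A_\mu f_1$ against $g_1$; taking the supremum over $g_1$ yields the claimed bound. If you want to keep a transference-flavoured argument, you would have to build the pairing against $v_\mu$ into the test on the $\mathbb{G}$-side (i.e.\ work with the bilinear form, as the paper does), rather than hoping the output tensor-factorizes.
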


\begin{proof}
Let us write $m(\xi,\mu):=m(e_\xi,\mu),$ for $e_\xi\in \widehat{\mathbb{R}}^n$. From the boundedness of $A:L^p(G)\rightarrow L^p(G),$ $1<p<\infty,$ we have that,

\begin{align*}
  & \Vert Af \Vert_{L^p(G)}\\
  &=\sup\{ |(Af,g)_{L^2(G)}|: g\in C^\infty_0(G),\,\,\Vert g\Vert_{L^{p'}}=1  \} \\
   &=\sup\{ |(\mathscr{F}_G(Af),\mathscr{F}_Gg)_{L^2(\widehat{G})}|: g\in C^\infty_0(G),\,\,\Vert g\Vert_{L^{p'}}=1  \} \\
   &=\sup_{ g\in C^\infty_0(G),\,\Vert g\Vert_{L^{p'}}=1   }\left\{ \left|\int\limits_{\widehat{G}}\textnormal{Tr}(   m(\xi,\mu) (\mathscr{F}_Gf)(\xi,\mu)  (\mathscr{F}_Gg)(\xi,\mu)^*)d(e_\xi\otimes \mu)  \right| \right\}\\
  & \leq \Vert A\Vert_{\mathscr{B}(L^p(G))}\Vert f\Vert_{L^p(G)}.
\end{align*}
Now, if we take $f(x,y)=f_1(x)f_2(y),$ with $f_1\in L^p(\mathbb{R}^n),$ and $f_2\in L^p(\mathbb{G}),$ compactly supported, and we also take $g(x,y)=g_1(x)g_2(y),$ with $g_1\in L^{p'}(\mathbb{R}^n),$ and $g_2\in L^{p'}(\mathbb{G}),$ compactly supported with $\Vert g_1 \Vert_{L^{p'}}=\Vert g_2 \Vert_{L^{p'}}=1,$ we have,
\begin{align*}
   & \int\limits_{\widehat{G}}\textnormal{Tr}(   m(\xi,\mu) (\mathscr{F}_Gf)(\xi,\mu)  (\mathscr{F}_Gg)(\xi,\mu)^*)d(e_\xi\otimes \mu)\\
   &=\int\limits_{\widehat{\mathbb{G}}}\int\limits_{\mathbb{R}^n}\textnormal{Tr}(   m(\xi,\mu) (\mathscr{F}_Gf)(\xi,\mu)  (\mathscr{F}_Gg)(\xi,\mu)^*)d\xi d\mu\\
   &=\int\limits_{\widehat{\mathbb{G}}}\textnormal{Tr}(   \int\limits_{\mathbb{R}^n}m(\xi,\mu) (\mathscr{F}_{\mathbb{R}^n}f_1)(\xi)(\overline{\mathscr{F}_{\mathbb{R}^n}g_1)(\xi)}d\xi (\mathscr{F}_{\mathbb{G}}f_2)(\mu)  (\mathscr{F}_{\mathbb{G}}g_2)(\mu)^{*})d\mu.
\end{align*}
Let 
\begin{equation}
    J(\mu):=\int\limits_{\mathbb{R}^n}m(\xi,\mu) (\mathscr{F}_{\mathbb{R}^n}f_1)(\xi)(\overline{\mathscr{F}_{\mathbb{R}^n}g_1)(\xi)}d\xi.
\end{equation}
Then,
\begin{align*}
  &\int\limits_{\widehat{G}}\textnormal{Tr}(   m(\xi,\mu) (\mathscr{F}_Gf)(\xi,\mu)  (\mathscr{F}_Gg)(\xi,\mu)^*)d(e_\xi\otimes \mu)\\
  &\hspace{4cm}=\int\limits_{\widehat{\mathbb{G}}}\textnormal{Tr}(   J(\mu) (\mathscr{F}_{\mathbb{G}}f_2)(\mu)  (\mathscr{F}_{\mathbb{G}}g_2)(\mu)^{*})d\mu.
\end{align*}
Now, let $T_J$ be the Fourier multiplier on $G,$ with symbol $J=\{J(\mu):\mu\in \widehat{\mathbb{G}}\}.$
From our previous analysis we have the inequality,
\begin{equation}
    \left|\int\limits_{\widehat{G}}\textnormal{Tr}(   m(\xi,\mu) (\mathscr{F}_Gf)(\xi,\mu)  (\mathscr{F}_Gg)(\xi,\mu)^*)d(e_\xi\otimes \mu)  \right|\leq \Vert A\Vert_{\mathscr{B}(L^p(G))}\Vert f_1\Vert_{L^{p}(\mathbb{R}^n)}\Vert f_2\Vert_{L^{p}(\mathbb{G})},
\end{equation}so we deduce that,
\begin{equation}
 \left| \int\limits_{\widehat{\mathbb{G}}}\textnormal{Tr}(   J(\mu) (\mathscr{F}_{\mathbb{G}}f_2)(\mu)  (\mathscr{F}_{\mathbb{G}}g_2)(\mu)^{*})d\mu \right|\leq  \Vert A\Vert_{\mathscr{B}(L^p(G))}\Vert f_1\Vert_{L^{p}(\mathbb{R}^n)}\Vert f_2\Vert_{L^{p}(\mathbb{G})},
\end{equation}
which shows that $T_J$ is bounded on $L^p(G)$ with the following estimate on the operator norm,
\begin{equation}
    \Vert T_J\Vert_{\mathscr{B}(L^p(\mathbb{G}))}\leq  \Vert A\Vert_{\mathscr{B}(L^p(G))} \Vert f_1\Vert_{L^{p}(\mathbb{R}^n)}.
\end{equation}
Indeed, \begin{align*}
  & \Vert T_Jf_2 \Vert_{L^p(\mathbb{G})}=\sup\{ |(T_Jf_2,g_2)_{L^2(\mathbb{G})}|: g_2\in C^\infty_0(\mathbb{G}),\,\,\Vert g_2\Vert_{L^{p'}}=1  \} \\
   &=\sup\{ |(\mathscr{F}_\mathbb{G}(T_Jf_2),\mathscr{F}_\mathbb{G}g_2)_{L^2(\widehat{G})}|: g_2\in C^\infty_0(\mathbb{G}),\,\,\Vert g_2\Vert_{L^{p'}}=1  \} \\
   &=\sup_{ g_2\in C^\infty_0(\mathbb{G}),\,\Vert g_2\Vert_{L^{p'}}=1   }\left\{ \left|\int\limits_{\widehat{\mathbb{G}}}\textnormal{Tr}(   J(\mu) (\mathscr{F}_\mathbb{G}f_2)(\mu)  (\mathscr{F}_\mathbb{G}g_2)(\mu)^*)d\mu  \right| \right\}\\
  & \leq \Vert A\Vert_{\mathscr{B}(L^p(\mathbb{G}))}\Vert f_1\Vert_{L^p(\mathbb{G})}\Vert f_2\Vert_{L^{p}(\mathbb{G})}.
\end{align*}
From the Riesz-Thorin interpolation theorem, we have the estimate,
\begin{align*}
    \sup_{\mu\in \widehat{\mathbb{G}}}\Vert J(\mu) \Vert_{\mathscr{B}(H_\mu)}=\Vert T_J\Vert_{  \mathscr{B}(L^2(G))  }\leq \Vert T_J\Vert_{  \mathscr{B}(L^{p}(\mathbb{G}))  }^{\frac{1}{2}} \Vert T_J\Vert_{  \mathscr{B}(L^{p'}(\mathbb{G}))  }^{\frac{1}{2}} =\Vert T_J\Vert_{  \mathscr{B}(L^{p}(\mathbb{G}))  }.
\end{align*}
For every $\mu\in \widehat{\mathbb{G}},$ and  $u_\mu,v_\mu\in H_\mu,$ such that $\Vert u_\mu\Vert_{H_\mu}=\Vert v_\mu\Vert_{H_\mu}=1,$ we have that  
\begin{equation}
  |(J(\mu)u_{\mu},v_{\mu})|\leq \sup_{\mu}  \Vert J(\mu) \Vert_{\mathscr{B}(H_\mu)} .
\end{equation} So, we conclude that 
\begin{align*}
    |(J(\mu)u_{\mu},v_{\mu})|&=\left|\int\limits_{\mathbb{R}^n}(m(\xi,\mu) u_\mu,v_\mu)_{H_\mu}(\mathscr{F}_{\mathbb{R}^n}f_1)(\xi)(\overline{\mathscr{F}_{\mathbb{R}^n}g_1)(\xi)}d\xi\right|\\
    &=\left|\int\limits_{\mathbb{R}^n}m_{\mu,u_\mu,v_\mu}(\xi)(\mathscr{F}_{\mathbb{R}^n}f_1)(\xi)(\overline{\mathscr{F}_{\mathbb{R}^n}g_1)(\xi)}d\xi\right|\\
    &\leq \Vert T_J\Vert_{  \mathscr{B}(L^{p}(\mathbb{G}))  } \leq\Vert A\Vert_{\mathscr{B}(L^p(G))} \Vert f_1\Vert_{L^{p}(\mathbb{R}^n)}.
\end{align*}The last inequality proves that $A_{\mu}$ is bounded on $L^p(\mathbb{R}^n).$ Indeed, 
\begin{align*}
  & \Vert A_{\mu}f_1 \Vert_{L^p(\mathbb{R}^n)}\\
  &=\sup\{ |(A_\mu f_1,g_1)_{L^2(\mathbb{R}^n)}|: g_1\in C^\infty_0(\mathbb{R}^n),\,\,\Vert g_1\Vert_{L^{p'}}=1  \} \\
   &=\sup\{ |(\mathscr{F}_{\mathbb{R}^n}(Af_1),\mathscr{F}_{\mathbb{R}^n}g_1)_{L^2(\widehat{\mathbb{R}^n})}|: g_1\in C^\infty_0(\mathbb{R}^n),\,\,\Vert g_1\Vert_{L^{p'}}=1  \} \\
   &=\sup_{ g_1\in C^\infty_0(\mathbb{R}^n),\,\Vert g_1\Vert_{L^{p'}}=1   }\left\{ \left|\,\,\int\limits_{\widehat{\mathbb{R}^n}}  m_{\mu,u_\mu,v_\mu}(\xi) (\mathscr{F}_{\mathbb{R}^n}f_1)(\xi)  \overline{(\mathscr{F}_{\mathbb{R}^n}g)(\xi)})d\xi \right| \right\}\\
  & \leq \Vert A\Vert_{\mathscr{B}(L^p(G))}\Vert f_1\Vert_{L^p(\mathbb{R}^n)}.
\end{align*}Thus, we end the proof.
\end{proof}
\begin{proof}[Proof of Theorem \ref{fact}]  It is obvious that if $p=2,$ then $\chi(\mathcal{R})$ is bounded on $L^2(G).$ Now, we will prove that if $\chi(\mathcal{R})$ is bounded on $L^p(G),$ $1<p<\infty,$ then $p=2.$ To do so, let us assume that there exists $p\neq 2,$ $1<p<\infty,$ such that $\chi(\mathcal{R})$ is bounded on $L^p(G).$ Let us find a contradiction. Fix $a>0,$ and let us consider $\chi_a,$ that is, the characteristic function of the interval $I_a=[0,a].$   Let us consider the operator $\chi_a(\mathcal{R})=\chi(\frac{1}{a}\mathcal{R}),$ defined by 
\begin{equation}
    \chi_a(\mathcal{R})f(z):=\int\limits_{\widehat{G}}\textnormal{Tr}\left[(e_\xi\otimes\mu)(z)\chi\left(\frac{1}{a}(e_\xi\otimes\mu)(\mathcal{R})\right)\widehat{f}(e_\xi\otimes\mu)\right]d(e_\xi\otimes\mu),
    \end{equation} for $z=(x,y)\in G=\mathbb{R}^n\times {\mathbb{G}},$ and $f\in C^\infty_0(G).$ Since $\mathcal{R}=\Delta_{x}\otimes 1_{C^\infty(G)} +1_{C^\infty(\mathbb{R}^n)}\otimes \mathcal{G}=:\Delta_{x}+\mathcal{G},$ we have that
    \begin{equation}
       ( e_\xi\otimes\mu)(\mathcal{R})=4\pi^2|\xi|^2I_{H_\mu}+\mu(\mathcal{G}),\,\,\textnormal{ a.e.w. } 
    \end{equation}
    So, we have
    \begin{equation}
        \chi\left(\frac{1}{a}(e_\xi\otimes\mu)(\mathcal{R})\right)= \chi\left(\frac{1}{a}(4\pi^2|\xi|^2I_{H_\mu}+\mu(\mathcal{G}))\right).
    \end{equation}
Because $\mathcal{G},$ is positive and self-adjoint,  so is $\mu(\mathcal{G})$. This is consequence of the fact that $\mu(\mathcal{G})$ satisfies the Rockland condition (see Proposition 4.2.6 of \cite{FischerRuzhanskyBook2015}).  For a.e. $\mu$ let us denote  by  $$\{dE_{\sqrt{\mu(\mathcal{G})}}(\lambda)\}_{0\leq \lambda< \infty},$$ the spectral measure associated to the operator $\sqrt{\mu(\mathcal{G})}.$  The functional calculus of Rockland operators allows to write
    \begin{equation}
        \chi(\frac{1}{a}(e_\xi\otimes\mu)(\mathcal{R}))=\int\limits_{0}^\infty  \chi_a(4\pi^2|\xi|^2+\lambda^2    )  dE_{\sqrt{\mu(\mathcal{G})}}(\lambda).
    \end{equation}
    By Lemma \ref{lemma1}, for every $\mu\in \widehat{\mathbb{G}},$ and  $u_\mu,v_\mu\in H_\mu,$ such that $\Vert u_\mu\Vert_{H_\mu}=\Vert u_\mu\Vert_{H_\mu}=1,$ we have that
    \begin{equation}
   m_{\mu,u_\mu,v_\mu}(\xi):   \xi\mapsto m_{\mu,u_\mu,v_\mu}(\xi):=  (\chi(\frac{1}{a}(e_\xi\otimes\mu)(\mathcal{R}))u_\mu,v_\mu)_{H_\mu},
    \end{equation}
is the symbol of some bounded Fourier multiplier $A_{\mu}$ on $L^p(\mathbb{R}^n).$ For sake of simplicity we can take $u_\mu=v_\mu.$ Let us observe that for every $f_1\in C^{\infty}_0(\mathbb{R}^n),$ we have
\begin{align*}
    A_{\mu}f_{1}(x)&=\int\limits_{\mathbb{R}^n}e^{i2\pi x\cdot \xi} m_{\mu,u_\mu,v_\mu}(\xi)(\mathscr{F}_{\mathbb{R}^n}f_1)(\xi)\,d\xi\\
    &=\int\limits_{\mathbb{R}^n}e^{i2\pi x\cdot \xi} (\chi(\frac{1}{a}(e_\xi\otimes\mu)(\mathcal{R}))u_\mu,v_\mu)_{H_\mu}  (\mathscr{F}_{\mathbb{R}^n}f_1)(\xi)\,d\xi.
    \end{align*}
    Because, 
    \begin{align*}
    (\chi(\frac{1}{a}(e_\xi\otimes\mu)(\mathcal{R}))u_\mu,u_\mu)_{H_\mu}=\int\limits_{0}^\infty  \chi_a(4\pi^2|\xi|^2+\lambda^2    )  d(E_{\sqrt{\mu(\mathcal{G})}}(\lambda)u_\mu,u_\mu),
\end{align*} by denoting $d\alpha_\mu(\lambda):=d(E_{\sqrt{\mu(\mathcal{G})}}(\lambda)u_\mu,u_\mu),$ the Riemann Stieltjes measure associated to the increasing function $\alpha_\mu(\lambda):=(E_{\sqrt{\mu(\mathcal{G})}}(\lambda)u_\mu,u_\mu),$ we have
\begin{align*}
   A_{\mu}f_{1}(x)&=  \int\limits_{\mathbb{R}^n}e^{i2\pi x\cdot \xi} \left(\chi\left(\frac{1}{a}(e_\xi\otimes\mu)(\mathcal{R})\right)u_\mu,u_\mu\right)_{H_\mu}  (\mathscr{F}_{\mathbb{R}^n}f_1)(\xi)\,d\xi\\
   &=\int\limits_{\mathbb{R}^n}e^{i2\pi x\cdot \xi} \int\limits_{0}^\infty  \chi_a(4\pi^2|\xi|^2+\lambda^2    )  d\alpha_\mu(\lambda) (\mathscr{F}_{\mathbb{R}^n}f_1)(\xi)\,d\xi\\
    &=\int\limits_{0}^\infty\int\limits_{\mathbb{R}^n}e^{i2\pi x\cdot \xi}   \chi_a(4\pi^2|\xi|^2+\lambda^2    )   (\mathscr{F}_{\mathbb{R}^n}f_1)(\xi)\,d\xi\,d\alpha_\mu(\lambda).
\end{align*}Because, the support of $\chi_a$ is $[0,a],$ for every $\lambda>\sqrt{a},$ $\chi_a(4\pi^2|\xi|^2+\lambda^2    ) =0.$ So, 
\begin{align*}
   A_{\mu}f_{1}(x)&=\int\limits_{0}^\infty\int\limits_{\mathbb{R}^n}e^{i2\pi x\cdot \xi}   \chi_a(4\pi^2|\xi|^2+\lambda^2    )   (\mathscr{F}_{\mathbb{R}^n}f_1)(\xi)\,d\xi\,d\alpha_\mu(\lambda)\\
   &=\int\limits_{0}^{\sqrt{a}}\int\limits_{\mathbb{R}^n}e^{i2\pi x\cdot \xi}   \chi_a(4\pi^2|\xi|^2+\lambda^2    )   (\mathscr{F}_{\mathbb{R}^n}f_1)(\xi)\,d\xi\,d\alpha_\mu(\lambda)\\
   &=\int\limits_{0}^{\sqrt{a}}  \chi_a(\Delta_x+\lambda^2    )   f_1(x)\,d\alpha_\mu(\lambda).
\end{align*} Now, if $f_1\in C^{\infty}_0(\mathbb{R}^n,\mathbb{R})$ is a real-valued function, and $F(\lambda):=\chi_a(\Delta_x+\lambda^2    ),$ the  positivity of both, $\chi_a$ and $\Delta_x+\lambda^2,$ imply that  $F(\lambda)$ is a positive operator (for $g\geq 0$ with $g\in C^{\infty}_0(\mathbb{R}^n),$ $F(\lambda)g\geq 0$), and that $F(\lambda)f_{1}$ is also real-valued.   From the Mean Value Theorem for the Riemann Stieltjes integral, we have that
\begin{equation}
    \int\limits_{0}^{\sqrt{a}}  \chi_a(\Delta_x+\lambda^2    )   f_1(x)\,d\alpha_\mu(\lambda)= \int\limits_{0}^{\sqrt{a}}  F(\lambda)f_{1}(x)\,d\alpha_\mu(\lambda)  =F(\lambda_0)f_1(x)  \int\limits_{0}^{\sqrt{a}}  d\alpha_\mu(\lambda),
\end{equation}for some $\lambda_0=\lambda_0(f_1)\in (0,\sqrt{a}).$ Note that,
\begin{equation}
    \int\limits_{0}^{\sqrt{a}}  d\alpha_\mu(\lambda)=\alpha_\mu(\sqrt{a})-\alpha_\mu(0).
\end{equation} Now, by following the properties of the spectral projections $dE_{\sqrt{\mu(\mathcal{G})}}(\lambda)$ we can compute $\alpha_\mu(\sqrt{a})-\alpha_\mu(0)$ as follows,
\begin{align*}
   & \alpha_\mu(\sqrt{a})-\alpha_\mu(0)\\
   &=(E_{\sqrt{\mu(\mathcal{G})}}(\sqrt{a})u_\mu,u_\mu)-(E_{\sqrt{\mu(\mathcal{G})}}(0)u_\mu,u_\mu)
     =((E_{\sqrt{\mu(\mathcal{G})}}(\sqrt{a})-E_{\sqrt{\mu(\mathcal{G})}}(0) ) u_\mu,u_\mu)\\
     &=(P(a) u_\mu,u_\mu),\,\,\,\,P(a):=(E_{\sqrt{\mu(\mathcal{G})}}(\sqrt{a})-E_{\sqrt{\mu(\mathcal{G})}}(0) ),\\
     &=(P(a)^2 u_\mu,u_\mu)=(P(a) u_\mu,P(a)u_\mu),\,\,\,\,\\
      &=\Vert P(a)u_\mu\Vert^2_{H_\mu},
\end{align*}where we are using that $P(a):=(E_{\sqrt{\mu(\mathcal{G})}}(\sqrt{a})-E_{\sqrt{\mu(\mathcal{G})}}(0) )$ is an orthogonal projection. If, for example, $u_\mu\in  \textnormal{Range}(P(a)),$ $\Vert P(a)u_\mu\Vert^2=\Vert u_\mu\Vert^2=1,$ (because $u_\mu$ satisfies the conditions in Lemma \ref{lemma1} where $\Vert u_{\mu}\Vert_{H_\mu}=1$). So, under the conditions imposed above on the unitary vector $u_\mu,$
we have 
\begin{equation}
    \int\limits_{0}^{\sqrt{a}}  \chi_a(\Delta_x+\lambda^2    )   f_1(x)\,d\alpha_\mu(\lambda)= \int\limits_{0}^{\sqrt{a}}  F(\lambda)f_1(x)\,d\alpha_\mu(\lambda)  =F(\lambda_0)f_{1}.
\end{equation}We, from the definition of every $F(\lambda),$ can  write
\begin{align*}
    F(\lambda_0)f_{1}(x)=\chi_a(\Delta_x+\lambda^2_0    )f_{1}(x)=\int\limits_{\mathbb{R}^n}e^{i2\pi x\cdot \xi}   \chi_a(4\pi^2|\xi|^2+\lambda^2_0    )   (\mathscr{F}_{\mathbb{R}^n}f_1)(\xi)\,d\xi.
\end{align*}Because 
\begin{align*}
    \chi_a(4\pi^2|\xi|^2+\lambda^2_0    ) =\chi\left(\frac{1}{a}( 4\pi^2|\xi|^2+\lambda^2_0)\right)=\chi_{a-\lambda_0^2}( 4\pi^2|\xi|^2),
\end{align*}where $\chi_{a-\lambda_0^2},$ is the characteristic function of the interval $I_{a-\lambda_0^2}=[0,a-\lambda_0^2],$ we have that 
\begin{align*}
    F(\lambda_0)f_{1}(x)&=\int\limits_{\mathbb{R}^n}e^{i2\pi x\cdot \xi}   \chi_a(4\pi^2|\xi|^2+\lambda^2_0    )   (\mathscr{F}_{\mathbb{R}^n}f_1)(\xi)\,d\xi\\
    &=\int\limits_{\mathbb{R}^n}e^{i2\pi x\cdot \xi}   \chi_{a-\lambda_0^2}(4\pi^2|\xi|^2   )   (\mathscr{F}_{\mathbb{R}^n}f_1)(\xi)\,d\xi=\chi_{a-\lambda_0^2}(\Delta_x )f_{1}(x).
\end{align*}
So, for $R=a-\lambda_0^2,$ 
\begin{equation}
    (A_\mu f_{1})(x)=\chi_{R}(\Delta_x )f_{1}(x)=\chi(\Delta_x )(f_{1}(\frac{1}{R}\cdot))(Rx),
\end{equation}or equivalently, by replacing $f_{1}$ by $f_{1}(R\cdot),$
\begin{equation}
    (A_\mu (f_{1}(R\cdot)))(x)=\chi(\Delta_x )(f_{1}(\cdot))(Rx).
\end{equation}Taking the $L^p$-norm in both sides, and using the boundedness of $A_\mu,$ we deduce that,
\begin{equation}
    \Vert(A_\mu (f_{1}(R\cdot)))(x)\Vert_{L^p(\mathbb{R}^n_x)}=\Vert \chi(\Delta_x )(f_{1}(\cdot))(Rx)\Vert_{L^p(\mathbb{R}^n_x)}\leq C\Vert f_{1}(Rx)\Vert_{ L^p(\mathbb{R}^n_x)}.
\end{equation}
If we do the change of variables $y:=Rx,$ the inequality  $$\Vert \chi(\Delta_x )(f_{1}(\cdot))(Rx)\Vert_{L^p(\mathbb{R}^n_x)}\leq C\Vert f_{1}(Rx)\Vert_{ L^p(\mathbb{R}^n_x)},$$ is equivalent to the following one, $$ \Vert \chi(\Delta_x )(f_{1})(y)\Vert_{L^p(\mathbb{R}^n_y)}\leq C\Vert f_{1}(y)\Vert_{ L^p(\mathbb{R}^n_y)}.$$
In the case where $f\in C^{\infty}_0(\mathbb{R}^n, \mathbb{C}),$ we can write $f=f_1+if_2,$ where $f_1:=\mathfrak{re}(f),$ $f_2:=\mathfrak{im}(f),$ and  we have
\begin{equation*}
    \Vert \chi(\Delta_x )(f)(y)\Vert_{L^p(\mathbb{R}^n_y)}\leq 2C\Vert f(y)\Vert_{ L^p(\mathbb{R}^n_y)}.
\end{equation*}
The previous estimate implies that $\chi(\Delta_x)$ is bounded on $L^p(\mathbb{R}^n),$  with $p\neq 2,$ and $1<p<\infty.$ But  from the Fefferman multiplier  theorem for the ball, we know that   $\chi(\Delta_x )$ is unbounded on $L^p(\mathbb{R}^n).$ So, $\chi_a(\mathcal{R}):L^p(G)\rightarrow L^p(G)$   is bounded only for $p=2,$ and we obtain the proof of Theorem \ref{fact} by taking $a=1.$
\end{proof}
 \begin{corollary}Let $a>0$ and let $1<p<\infty.$ Then
    $\chi_a(\mathcal{R}):L^p(G)\rightarrow L^p(G)$    extends to a  bounded linear operator, if and only if $p=2.$
 
 \end{corollary}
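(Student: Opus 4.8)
The plan is to observe that the proof of Theorem \ref{fact} already establishes exactly this statement for \emph{every} $a>0$, and only specializes to $a=1$ in its final line. So the corollary follows by inspecting that argument; for completeness I would organize it as follows.

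First, the easy direction. If $p=2$, then since $\mathcal{R}=\Delta_x+\mathcal{G}$ is a positive self-adjoint operator on $L^2(G)$ and $\chi_a=\chi(\tfrac{1}{a}\,\cdot\,)$ is a bounded Borel function, the spectral theorem---equivalently, the Plancherel theorem together with the functional calculus of the infinitesimal representations $(e_\xi\otimes\mu)(\mathcal{R})=4\pi^2|\xi|^2 I_{H_\mu}+\mu(\mathcal{G})$---gives at once $\Vert\chi_a(\mathcal{R})\Vert_{\mathscr{B}(L^2(G))}\leq 1$.

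For the converse, suppose $\chi_a(\mathcal{R})$ is bounded on $L^p(G)$ for some $1<p<\infty$ with $p\neq 2$; I would derive a contradiction verbatim as in the proof of Theorem \ref{fact}. Apply Lemma \ref{lemma1} with $A=\chi_a(\mathcal{R})$ to produce, for a.e. $\mu\in\widehat{\mathbb{G}}$ and any unit vector $u_\mu\in H_\mu$, a Fourier multiplier $A_\mu$ bounded on $L^p(\mathbb{R}^n)$ with symbol $\xi\mapsto(\chi_a((e_\xi\otimes\mu)(\mathcal{R}))u_\mu,u_\mu)_{H_\mu}$. Using the spectral measure $dE_{\sqrt{\mu(\mathcal{G})}}(\lambda)$ of $\sqrt{\mu(\mathcal{G})}$ and the fact that $\chi_a(4\pi^2|\xi|^2+\lambda^2)$ vanishes for $\lambda>\sqrt a$, one rewrites $A_\mu f_1=\int_0^{\sqrt a}\chi_a(\Delta_x+\lambda^2)f_1\,d\alpha_\mu(\lambda)$ with $\alpha_\mu(\lambda)=(E_{\sqrt{\mu(\mathcal{G})}}(\lambda)u_\mu,u_\mu)$; the Mean Value Theorem for the Riemann--Stieltjes integral (applicable because $F(\lambda)=\chi_a(\Delta_x+\lambda^2)$ is a positive operator on real-valued test functions) yields $A_\mu f_1=F(\lambda_0)f_1\cdot(\alpha_\mu(\sqrt a)-\alpha_\mu(0))$ for some $\lambda_0\in(0,\sqrt a)$, and choosing $u_\mu$ in the range of the orthogonal projection $P(a)=E_{\sqrt{\mu(\mathcal{G})}}(\sqrt a)-E_{\sqrt{\mu(\mathcal{G})}}(0)$ makes the scalar factor equal to $\Vert P(a)u_\mu\Vert_{H_\mu}^2=1$. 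Since $\chi_a(4\pi^2|\xi|^2+\lambda_0^2)=\chi_{a-\lambda_0^2}(4\pi^2|\xi|^2)$, this identifies $A_\mu$ with $\chi_{a-\lambda_0^2}(\Delta_x)$, i.e., after the dilation $y=Rx$ with $R=a-\lambda_0^2\in(0,a)$, with $\chi(\Delta_x)$ itself; hence $\chi(\Delta_x)$ would be bounded on $L^p(\mathbb{R}^n)$, contradicting Fefferman's ball multiplier theorem.

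The only point requiring any care---and it is already dispatched inside the proof of Theorem \ref{fact}---is the passage to the $\lambda$-parametrized representation of $A_\mu$ together with the legitimacy of the scalar Mean Value Theorem: one must first work with real-valued $f_1$ and exploit the positivity of $F(\lambda)$, and then split a general $f\in C_0^\infty(\mathbb{R}^n,\mathbb{C})$ into real and imaginary parts at the cost of a harmless factor $2$ in the norm estimate. No new idea beyond the proof of Theorem \ref{fact} is needed, since the parameter $a>0$ played no special role there; the corollary is precisely Theorem \ref{fact} with the normalization $a=1$ lifted.
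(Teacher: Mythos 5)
Your proposal is correct and matches the paper's own route exactly: the proof of Theorem \ref{fact} is carried out for an arbitrary fixed $a>0$ and only specializes to $a=1$ at the very last line, so the corollary is precisely that argument with the specialization removed, as you observe. Nothing further is needed beyond rerunning the same steps (Lemma \ref{lemma1}, the spectral-measure reduction to $\chi_{a-\lambda_0^2}(\Delta_x)$, and Fefferman's theorem), which is what you do.
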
 
 
\begin{remark}
 In the proof of Theorem \ref{fact} we can replace $\mathcal{G},$   for any operator of the form $\phi(\mathcal{G}),$ (and in such a case $\mu(\mathcal{G})$ can be replaced by $\phi(\mu(\mathcal{G}))$ for a.e. $\mu\in \mathbb{G}$)  with $\phi\geq 0,$ being a  measurable function on $\mathbb{R}^{+}_0.$ \footnote{Indeed,  in view of Remark 4.1.17 of \cite{FischerRuzhanskyBook2015}, $\phi(\mathcal{G})$ is a Fourier multiplier with a positive symbol   $\phi(\mu(\mathcal{G}))$ for $a.e.$ representation space $H_{\mu}.$ This fact  guarantees the existence of the spectral measures $\{dE_{\phi(\mathcal{G})}\}_{\lambda\geq 0}$ and $\{dE_{\phi(\mu(\mathcal{G}))}\}_{\lambda\geq 0}.$  } So, the proof of Theorem \ref{fact} adapted to this case, shows  that  $\chi_a(\Delta_x+\phi(\mathcal{G})):L^p(G)\rightarrow L^p(G)$    extends to a  bounded linear operator, if and only if $p=2.$ For details about  the Functional calculus of Rockland operators we refer the reader to \cite[Page 178]{FischerRuzhanskyBook2015}.  
 \end{remark}

\noindent {\bf{Acknowledgement.}} With greatest pleasure, the author would like to thank Prof.  Michael  Ruzhansky  for helpful discussions.

\bibliographystyle{amsplain}

\end{document}